\documentclass[12pt, twoside, leqno]{article}
\usepackage{cite}
\usepackage{indentfirst}
\usepackage[T1]{fontenc}
\usepackage{enumitem}
\usepackage{amsthm,amsmath}
\usepackage{amssymb}
\pagestyle{myheadings}
\markboth{N. Li and R. Ma}{Some new curious congruences involving multiple harmonic sums }
\newtheorem{thm}{Theorem}[section]
\newtheorem{lemma}{Lemma}[section]

\newtheorem*{xrem}{Remark}
\numberwithin{equation}{section}
\frenchspacing
\textwidth=16cm
\textheight=23cm
\parindent=16pt
\oddsidemargin=-0.5cm
\evensidemargin=-0.5cm
\topmargin=-0.5cm

\begin{document}
	\allowdisplaybreaks[3]
	\baselineskip=17pt
	\title{\bf Some new curious  congruences involving multiple harmonic sums}
	\author{Rong Ma\\
		School of Mathematics and Statistics,\\
		Northwestern
		Polytechnical University\\
		Xi'an, Shaanxi, 710072,\\
		People's Republic of China\\
		E-mail: marong@nwpu.edu.cn\\
		\and 
		Li Ni\\
		School of Mathematics and Statistics, \\
		Northwestern
		Polytechnical University\\
		Xi'an, Shaanxi, 710072,\\
		People's Republic of China\\
		E-mail: lini@mail.nwpu.edu.cn
		}
	\date{}
	\maketitle
	\renewcommand{\thefootnote}{}
	\footnote{2020 \emph{Mathematics Subject Classification}: Primary 11A07; Secondary 11B68.}
	\footnote{\emph{Key words and phrases}:  congruences,  multiple harmonic sums, Bernoulli polynomials, Bernoulli numbers.}
	\renewcommand{\thefootnote}{\arabic{footnote}}
	\setcounter{footnote}{0}
	
	\begin{abstract}
	It is significant to study congruences involving multiple harmonic sums. Let $p$ be an odd prime, in recent years, the following curious congruence $$
	 \sum_{\substack{i+j+k=p \\ i, j, k>0}} \frac{1}{i j k} \equiv-2 B_{p-3}\pmod p
	$$	has been generalized along different directions, where $B_n$ denote the $n$th Bernoulli number. In this paper, we obtain several new generalizations of the above congruence by applying congruences involving  multiple harmonic sums. For example, we have $$
	\sum_{\substack{k_1+k_2+\cdots+k_n=p \\  k_i >0, 1 \le i \le n}} \dfrac{(-1)^{k_1}\left(\dfrac{k_1}{3}\right)}{k_1 \cdots k_n} \equiv  \dfrac{(n-1)!}{n}\dfrac{2^{n-1}+1}{3\cdot6^{n-1}}B_{p-n}\left(\dfrac{1}{3}\right)\pmod p,
	$$
	where $n$ is even, $B_n(x)$ denote the Bernoulli polynomials.
	\end{abstract}

	\section{Introduction}
	Let $\mathbb{N}$ be the set of positive integers. For any $n$, $N \in \mathbb{N}$ and \textbf{s} $=(s_1, \cdots ,s_n) \in \mathbb{N}^n$, we define the multiple harmonic sums (MHS) by$$
	H_N(\textbf{s}) :=\sum\limits_{0<k_1<\cdots<k_n\le N}\frac{1}{k_{1}^{s_1}\cdots k_{n}^{s_n}}.$$\par 
	Since the middle of 1980s, multiple harmonic sums have played an important role in the study of mathematics and theoretical physics. From 2008 onwards, JianQiang Zhao in \cite{Z1, Z2} and Hoffman in \cite{H} independently obtained many congruences modulo  prime and prime powers for MHS. Subsequently, Roberto Tauraso in \cite{T} began to consider congruence properties of alternating multiple harmonic sums which are defined as follows. Let $n$ be any positive integer and  $(r_1, \cdots ,r_n) \in (\mathbb{Z^*})^n$. For any $N \ge n$, we define the alternating multiple harmonic sums as $$
		H(r_1, r_2,\cdots ,r_n; N) :=\sum\limits_{0<k_1<\cdots<k_n\le N}\prod_{i=1}^{n}\frac{\text{sgn}^{k_i}(r_i)}{k_{i}^{\mid r_i \mid }}.$$\par
		Besides, many number theorists also investigated congruences of MHS with coefficients involving invariant sequences (see \cite{S2}). For example, in 2010, LiLu Zhao and ZhiWei Sun (see \cite[Theorem 1.1]{ZS}) showed that for any positive odd integer $n$, prime $p$ such that $p>n+1$
		\begin{equation}\tag{1.1}
		\sum\limits_{0<k_1<\cdots<k_n<p } \frac{(-1)^{k_1}\left(\dfrac{k_1}{3}\right)}{k_1 \cdots k_n}\equiv 0 \pmod p.
		\end{equation}\par
	    Sandro Mattarei and Tauraso (see \cite[Theorem 4.1]{MT}) proved that for any positive integer $n$ and prime $p$ with $p>\max (n+1,3)$ 
	    \begin{equation}\tag{1.2}
	    \sum_{0<k_1<\cdots<k_n<p}\left(\frac{p-k_n}{3}\right) \frac{(-1)^{k_n}}{k_1 \cdots k_n} \equiv \begin{cases}-\dfrac{2^{n+1}+2}{6^{n+1}} p B_{p-n-1}(1 / 3) \pmod {p^2}, \text { if } n \text { is odd }, \\ -\dfrac{2^{n+1}+4}{n 6^n} B_{p-n}(1 / 3) \pmod p, \quad \text { if } n \text { is even },\end{cases}
	    \end{equation}
	    where $B_n(x)$ denote the Bernoulli polynomials defined by $$ \dfrac{ze^{xz}}{e^z-1}=\sum\limits_{n=0}^{\infty}\dfrac{B_n(x)}{n!}z^n\left(\mid z \mid <2 \pi\right).$$
	     \par
	    As an application of multiple harmonic sums' congruent properties, Zhao (see \cite[Corollary 4.2]{Z1}) first proved the following curious congruence
	    \begin{equation}\tag{1.3}
	     \sum_{\substack{i+j+k=p \\ i, j, k>0}} \frac{1}{i j k} \equiv-2 B_{p-3}\pmod p
	    \end{equation}
	    for any prime $p \ge 3$, where $B_n$ denotes the $n$th Bernoulli number given by $B_0=1$ and $B_n=\sum\limits_{j=0}^{n}B_j\left(\begin{array}{c}
	    n\\
	    j 
	    \end{array}\right)\left(n \ge 2\right)$. Later on, this congruence has been generalized along different directions. For example, Zhou and  Cai (see \cite{ZC}) generalized (1.3) by increasing the number of indices. Wang, Cai (see \cite{WC}) and Cai, Shen, Jia (see \cite{CSJ}) generalized the bound from $p$ to $p$-powers, and a product of two odd prime powers, respectively. In addtion, some scholars established and generalized congruences for alternating version of (1.3) (see \cite{SC}, \cite{CHQ}).\par
	    In view of congruences (1.1) and (1.2). In this paper, we shall prove the following theorems.
	    \begin{thm}
	    Let $n$ be a positive integer and $p$ a prime with $p >\max (n+1,3)$, then
	    \begin{multline}\notag
	    	\begin{aligned}
	    	&\sum\limits_{0<k_1<\cdots<k_n\le p-1}\dfrac{(-1)^{(p-k_n)}\left(\dfrac{p-k_n}{3}\right)}{k_{1}\cdots k_n^{2}}\\
	    	&\equiv \begin{cases}  -\dfrac{2^{n}+1}{3(n+1)6^n}B_{p-n-1}\left(\dfrac{1}{3}\right)\pmod p & \text {if $2 \nmid n$} , \\
	    		\sum\limits_{r=0}^{p-1-n}\left(\begin{array}{c}
	    			{p-n} \\
	    			r
	    		\end{array}\right) \dfrac{(2^{n+r+1}+4)B_rB_{p-n-r}(1 / 3)}{n(n+r)6^{n+r}}-\left(\dfrac{p}{3}\right)B_{p-n-1}\pmod p &  \text {if $2 \mid n$} .\end{cases}
	    	\end{aligned}
	    \end{multline}
	    \end{thm}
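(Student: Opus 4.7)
The plan is to reduce $S$ modulo $p$ to one‑dimensional Euler--type sums controlled by the Mattarei--Tauraso congruence (1.2), via a reflection on the outermost index together with the classical factorization of $\prod_{k=1}^{p-1}(1+x/k)$ modulo $p$.

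First, put $l = p - k_n$. Using the parity of $p$, the identity $\left(\frac{p-k_n}{3}\right) = \left(\frac{l}{3}\right)$, and $(p-l)^2 \equiv l^2 \pmod p$, one obtains
\begin{equation}\notag
S \;\equiv\; \sum_{l=1}^{p-1} \frac{(-1)^l \left(\frac{l}{3}\right)}{l^{2}}\, H_{p-l-1}\bigl(\underbrace{1,\ldots,1}_{n-1}\bigr) \pmod p.
\end{equation}
Second, the generating identity $\sum_{m\ge 0} H_N(\underbrace{1,\ldots,1}_m) x^m = \prod_{k=1}^{N}(1+x/k)$ combined with Wilson's theorem in the form $\prod_{k=1}^{p-1}(1+x/k) \equiv 1 - x^{p-1} \pmod p$ and the reflection $\tfrac{1}{p-m} \equiv -\tfrac{1}{m}\pmod p$ yield the clean polynomial congruence $\prod_{k=1}^{p-l-1}(1+x/k)\cdot\prod_{m=1}^{l}(1-x/m) \equiv 1-x^{p-1}\pmod p$, from which inverting the second factor gives
\begin{equation}\notag
H_{p-l-1}\bigl(\underbrace{1,\ldots,1}_{n-1}\bigr) \;\equiv\; H_l^{\star}\bigl(\underbrace{1,\ldots,1}_{n-1}\bigr) \pmod p,
\end{equation}
where $H_l^{\star}$ denotes the non‑strict multiple harmonic sum. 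Therefore modulo $p$, $S$ equals $\sum_{l=1}^{p-1} \frac{(-1)^l \left(\frac{l}{3}\right)}{l^{2}}\, H_l^{\star}(1,\ldots,1)$.

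Third, expand $H_l^{\star}(\underbrace{1,\ldots,1}_{n-1}) = h_{n-1}(1,\tfrac12,\ldots,\tfrac1l)$ via Newton's identities as a polynomial in the power sums $H_l(s)$. After substitution and interchange of summation, the target becomes a finite $\mathbb{Z}$‑linear combination of sums of the shape
\begin{equation}\notag
\sum_{l=1}^{p-1} \frac{(-1)^l \left(\frac{l}{3}\right)}{l^{s}} \prod_{i} H_l(s_i)^{m_i} \pmod p,
\end{equation}
and each such sum is in turn evaluated by repeatedly applying the reflection $H_l(s) = H_{p-1}(s) - \sum_{k=l+1}^{p-1} k^{-s} \equiv (-1)^{s+1}\sum_{m=1}^{p-l-1} m^{-s} \pmod p$, interchanging summations, and invoking the $n=1$ instance of (1.2) (together with its mod $p^2$ refinement in the odd‑weight sub‑case).

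Finally, the resulting terms are combined using $B_n(1-x) = (-1)^n B_n(x)$, the distribution formula $B_n(x)+B_n(x+\tfrac13)+B_n(x+\tfrac23) = 3^{1-n}B_n(3x)$, and the classical convolution $\sum_{r=0}^{n}\binom{n}{r}B_r B_{n-r}(x)$. The parity split in the theorem mirrors that in (1.2): for odd $n$ the overall weight $n+1$ of our sum is even, so only a single surviving family produces the clean $B_{p-n-1}(1/3)$ expression; for even $n$ the weight $n+1$ is odd, which forces interaction between two families and produces the stated convolution $\sum_{r}\binom{p-n}{r}B_r B_{p-n-r}(1/3)$ together with the residual $\left(\frac{p}{3}\right) B_{p-n-1}$ term. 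The main obstacle will be the bookkeeping in the third step: one must match the messy Newton--Girard expansion of $H_l^{\star}$ against the very specific algebraic form of the stated closed expression. In particular, the even‑$n$ case requires using the mod $p^2$ refinement of (1.2)---a factor of $p$ in the denominator of an intermediate step must cancel against the $p$ present in the odd‑weight Mattarei--Tauraso coefficient---and carefully tracking this cancellation through the Bernoulli polynomial identities is the principal technical hurdle.
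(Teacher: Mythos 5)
Your first two steps are sound: writing $S \equiv \sum_{l=1}^{p-1}\frac{(-1)^l\left(\frac{l}{3}\right)}{l^2}\,H_{p-l-1}(\{1\}^{n-1})\pmod p$ and then invoking the duality $H_{p-l-1}(\{1\}^{n-1})\equiv H_l^{\star}(\{1\}^{n-1})\pmod p$ via $\prod_{k=1}^{p-1}(1+x/k)\equiv 1-x^{p-1}\pmod p$ is correct. But note that this only rewrites the original strict depth-$n$ sum as a non-strict depth-$n$ sum; it does not yet reduce the difficulty of the problem.

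The genuine gap is in your third step. Expanding $H_l^{\star}(\{1\}^{n-1})=h_{n-1}(1,\tfrac12,\dots,\tfrac1l)$ by Newton--Girard produces sums of the form $\sum_{l}\frac{(-1)^l(\frac{l}{3})}{l^{s}}\prod_i H_l(s_i)^{m_i}$, and your claim that each of these is evaluated ``by repeatedly applying the reflection, interchanging summations, and invoking the $n=1$ instance of (1.2)'' is not substantiated and will not go through as stated: after one reflection and one interchange you are left with depth-$\ge 2$ character sums such as $\sum_{0<j<i<p}\frac{(-1)^i(\frac{i}{3})}{i\,j^{n}}$ (or worse, with several nested indices), which are precisely the hard objects to be computed, whereas the $n=1$ case of (1.2) only evaluates the depth-one sum $\sum_k(-1)^k(\frac{k}{3})/k$. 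Your own remark that the even-$n$ case needs the mod $p^2$ refinement of (1.2) so that ``a factor of $p$ in the denominator cancels'' concedes that the intermediate quantities are not even defined mod $p$ without additional mod $p^2$ congruences that you neither state nor prove. What is missing is a mechanism that collapses the depth. The paper obtains exactly this by integrating the Mattarei--Tauraso polynomial congruence (its Lemma 2.4) to get Lemma 2.6, which converts the depth-$n$ sum with squared top index into the depth-two sum $\sum_{0<i_1\le i_2\le p-1}(1-x)^{i_1}/(i_1 i_2^{n})$ plus a $B_{p-1-n}$ correction when $n$ is even; the depth-two object is then finished by a symmetry (index-reversal) argument for odd $n$, and for even $n$ by the Bernoulli power-sum formula together with the single character sum evaluated in Lemma 2.7. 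Without such a depth-reduction step, or a fully worked combinatorial identity replacing it, your outline does not constitute a proof of the theorem.
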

     \begin{thm}
    	Let $n \ge 2$ be a positive integer and $p$ a prime with $p > n+1$, then
    	\begin{multline}\notag
    		\begin{aligned}
    		&\sum_{\substack{k_1+k_2+\cdots+k_n=p \\  k_i >0, 1 \le i \le n}} \dfrac{(-1)^{k_1}\left(\dfrac{k_1}{3}\right)}{k_1 \cdots k_n}\\
    		&\equiv  \begin{cases}  \dfrac{(n-1)!}{n}\dfrac{2^{n-1}+1}{3\cdot6^{n-1}}B_{p-n}\left(\dfrac{1}{3}\right)\pmod p & \text {if $2 \mid n$} , \\
    			-(n-2)!\sum\limits_{r=0}^{p-n}\left(\begin{array}{c}
    				{p-n+1} \\
    				r
    			\end{array}\right) \dfrac{(2^{n+r}+4)B_rB_{p-n-r+1}(1 / 3)}{(n+r-1)6^{n+r-1}}\\
    		\quad {}+(n-1)!\left(\dfrac{p}{3}\right)B_{p-1-n}\pmod p &  \text {if $2 \nmid n$} .\end{cases}
    		\end{aligned}
    	\end{multline}
    \end{thm}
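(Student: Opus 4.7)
The plan is to peel off the twist $(-1)^{k_1}\left(\frac{k_1}{3}\right)$ from the diagonal sum, express the inner symmetric sum via a nested multiple harmonic sum, and then apply Theorem~1.1 together with the Mattarei--Tauraso congruence (1.2). Separating $k_1$, the diagonal sum factors as
\[
S:=\sum_{\substack{k_1+\cdots+k_n=p\\ k_i>0}}\frac{(-1)^{k_1}\left(\frac{k_1}{3}\right)}{k_1\cdots k_n}=\sum_{k=1}^{p-n+1}\frac{(-1)^k\left(\frac{k}{3}\right)}{k}\,T_{n-1}(p-k),
\]
where $T_m(N):=\sum_{a_1+\cdots+a_m=N,\,a_i>0}\frac{1}{a_1\cdots a_m}$. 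Differentiating the generating function identity $(-\log(1-x))^m=\sum_N T_m(N)x^N$ yields the recursion $NT_m(N)=m\sum_{j=1}^{N-1}T_{m-1}(j)$, which gives by induction
\[
T_m(N)=\frac{m!}{N}\,H_{N-1}\bigl(\underbrace{1,1,\ldots,1}_{m-1}\bigr).
\]
Using $\tfrac{1}{p-k}\equiv-\tfrac{1}{k}\pmod p$, this reduces $S$ to
\[
S\equiv -(n-1)!\sum_{k=1}^{p-n+1}\frac{(-1)^k\left(\frac{k}{3}\right)}{k^2}\,H_{p-k-1}\bigl(\underbrace{1,\ldots,1}_{n-2}\bigr)\pmod p.
\]

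The next step is to exchange the orders of summation and apply the reversal identity $H_{p-1-k}(\{1\}^m)\equiv (-1)^{m+1}H_k(\{1\}^m)\pmod p$, which follows from the involution $i\mapsto p-i$ together with the Wolstenholme-type vanishing $H_{p-1}(\{1\}^m)\equiv 0\pmod p$ for $1\le m\le p-2$. Iterating this (plus a partial-fraction expansion when products of shifted MHS arise) turns $S\bmod p$ into a $\mathbb{Z}$-linear combination of nested alternating multiple harmonic sums of the form
\[
\sum_{0<k_1<\cdots<k_m<p}\frac{(-1)^{k_m}\left(\frac{p-k_m}{3}\right)}{k_1^{s_1}\cdots k_m^{s_m}},
\]
with $m=n-1$; the leading contribution has weight tuple $(1,\ldots,1,2)$, exactly matching Theorem~1.1, while the lower-depth remainders fall under (1.2).

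Invoking Theorem~1.1 and (1.2) then produces the stated Bernoulli polynomial expressions: $B_{p-n}(1/3)$ in the even-$n$ case, and the sum involving $B_rB_{p-n-r+1}(1/3)$ in the odd-$n$ case. The parity dichotomy in Theorem~1.2 is opposite to that in Theorem~1.1 because the substitution converts $n$ diagonal indices to $n-1$ chain indices. The ``extra'' term $(n-1)!\left(\frac{p}{3}\right)B_{p-1-n}$ in the odd-$n$ case arises from comparing $\left(\frac{k}{3}\right)$ with $\left(\frac{p-k}{3}\right)$: the two differ by a case-dependent correction modulo $3$, whose cumulative contribution is captured by the Legendre symbol $\left(\frac{p}{3}\right)$. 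The main obstacle is the bookkeeping of the reversal and partial-fraction expansions together with the sign and character manipulations; once these are carried through, the Bernoulli polynomial formulas drop out of (1.2) and Theorem~1.1.
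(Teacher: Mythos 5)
Your opening moves coincide exactly with the paper's: isolating $k_1$, deriving $N\,T_m(N)=m\sum_{j<N}T_{m-1}(j)$ and hence $T_m(N)=\frac{m!}{N}H_{N-1}(\{1\}^{m-1})$, and using $\frac{1}{p-k}\equiv-\frac{1}{k}$ to arrive at
$$
S\equiv-(n-1)!\sum_{k}\frac{(-1)^k\left(\frac{k}{3}\right)}{k^2}\,H_{p-k-1}(\{1\}^{n-2})\pmod p,
$$
which is precisely the paper's intermediate congruence (3.3). The gap is in what you do next. Relabelling \emph{only the outer index} by $k=p-l$ (so $(-1)^k=(-1)^{p-l}$, $\left(\frac{k}{3}\right)=\left(\frac{p-l}{3}\right)$, $k^2\equiv l^2$, and $H_{p-k-1}=H_{l-1}$) turns this expression verbatim into
$$
-(n-1)!\sum_{0<l_{n-2}<\cdots<l_1<l\le p-1}\frac{(-1)^{p-l}\left(\frac{p-l}{3}\right)}{l^2\,l_1\cdots l_{n-2}},
$$
i.e.\ $-(n-1)!$ times the left-hand side of Theorem~1.1 at depth $n-1$; substituting the two cases of Theorem~1.1 (with $n$ replaced by $n-1$, which is what flips the parity dichotomy) immediately yields both cases of Theorem~1.2. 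No reversal of the inner harmonic sum, no partial fractions, and no appeal to (1.2) are needed.

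Instead, you propose to reverse the inner sum via $H_{p-1-k}(\{1\}^m)\equiv(-1)^{m+1}H_k(\{1\}^m)\pmod p$. That identity is false for $m\ge 2$: the involution $i\mapsto p-i$ sends $H_{p-1-k}(1,1)$ to $\sum_{k<i<j<p}\frac{1}{ij}\equiv\frac{1}{2}\left(H_k(1)^2+H_k(2)\right)=H_k(1,1)+H_k(2)$, not to $-H_k(1,1)$; in general stuffle corrections of every lower depth appear. Your plan then has to track these corrections (the ``partial-fraction expansions'' and ``lower-depth remainders'' you would feed into (1.2)), plus a further unquantified ``case-dependent correction'' reconciling $\left(\frac{k}{3}\right)$ with $\left(\frac{p-k}{3}\right)$ --- none of which is carried out, and all of which is spurious: done correctly there is exactly one term and it is the statement of Theorem~1.1 in disguise. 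So the proposal reaches the right reduction but then stalls at, and mishandles, the one step that actually finishes the proof.
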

 \begin{thm}
	Let $n \ge 2$ be a positive integer and $p$ a prime with $p > {n+1}$, then
	\begin{multline}\notag
		\begin{aligned}
			&\sum_{\substack{k_1+k_2+\cdots+k_n=p \\  k_i >0, 1 \le i \le n}} \dfrac{(-1)^{k_1}\left(\left(\dfrac{k_1+1}{3}\right)-\left(\dfrac{k_1-1}{3}\right)\right)}{k_1 \cdots k_n}\\
			& \equiv  \begin{cases} 
				-(n-2)!\sum\limits_{r=0}^{p-n}\left(\begin{array}{c}
					{p-n+1} \\
					r
				\end{array}\right) \dfrac{(3^{n+r-2}-1)(2^{n+r-2}-1)B_rB_{p-n-r+1}}{(n+r-1)6^{n+r-2}} & \text {if $2 \mid n$} , \\
				-(n-1)!\left(1-\dfrac{(3^{n-1}-1)(2^{n-1}-1)}{2n 6^{n-1}}\right) B_{p-n}  & \text {if $2 \nmid n$}.\end{cases}\pmod p
		\end{aligned}
	\end{multline}
\end{thm}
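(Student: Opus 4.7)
The first step is to recognise the character $\chi(k):=\left(\frac{k+1}{3}\right)-\left(\frac{k-1}{3}\right)$ as the ``even'' combination of cube roots of unity: a direct check shows $\chi(k)=2$ when $3\mid k$ and $\chi(k)=-1$ otherwise, so $\chi(k)=\omega^{k}+\omega^{-k}$ for $\omega=e^{2\pi i/3}$. Together with the sign $(-1)^{k_1}$, the weight $(-1)^{k_1}\chi(k_1)$ becomes $\zeta^{k_1}+\zeta^{-k_1}$, where $\zeta=-\omega$ is a primitive sixth root of unity. This is essentially the same object handled in Theorem~1.2, but with the Jacobi symbol $\left(\frac{k_1}{3}\right)$ replaced by the two shifted copies $\left(\frac{k_1\pm1}{3}\right)$, so the same machinery should apply after a shift of argument.

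Given this, I would imitate the reduction already used in the derivation of Theorem~1.2 from congruence~(1.2): substitute $k_n=p-(k_1+\cdots+k_{n-1})$ in the composition sum, apply the Fermat-style relation $\frac{1}{p-s}\equiv-\frac{1}{s}\pmod p$ together with partial fractions, and rewrite the composition sum as a combination of nested alternating multiple harmonic sums of the form appearing on the left of~(1.2). Because of the shifts $k_1\mapsto k_1\pm1$ in the Jacobi symbol, applying~(1.2) after the relabelling (and invoking the symmetry $B_m(1-x)=(-1)^mB_m(x)$) expresses the right-hand side as a linear combination of the four Bernoulli-polynomial values $B_m(1/3)$, $B_m(2/3)$, $B_m(1/6)$, $B_m(5/6)$.

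The final step is to collapse these polynomial values to plain Bernoulli numbers, which is what produces the characteristic factors appearing in Theorem~1.3. The relevant identities are the distribution formulas
\begin{equation*}
B_m(1/3)+B_m(2/3)=(3^{1-m}-1)B_m,\qquad B_m(1/2)=(2^{1-m}-1)B_m,
\end{equation*}
whose combination yields
\begin{equation*}
B_m(1/6)+B_m(5/6)=(2^{1-m}-1)(3^{1-m}-1)B_m.
\end{equation*}
The last identity is precisely the source of the factor $(3^{n+r-2}-1)(2^{n+r-2}-1)/6^{n+r-2}$ in the even-$n$ convolution, while in the odd-$n$ case the vanishing of odd Bernoulli numbers of index $\ge 3$ collapses the convolution to the single term $B_{p-n}$ with the stated rational coefficient. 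The main obstacle I anticipate is not conceptual but combinatorial: tracking the binomial factor $\binom{p-n+1}{r}$ produced by the Fermat substitution, verifying that the Jacobi-symbol shifts $k\mapsto k\pm1$ interact correctly with $(-1)^{k}$ so that only the sixth-root combination survives, and confirming the precise sign and index shift of the leading boundary contribution will require bookkeeping closely parallel to, but slightly more delicate than, that in the proof of Theorem~1.2.
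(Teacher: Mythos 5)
Your opening moves match the paper's: the identity $\left(\frac{k+1}{3}\right)-\left(\frac{k-1}{3}\right)=\omega^{k}+\omega^{-k}$, the substitution $k_1=p-l$ with $\frac{1}{p-l}\equiv-\frac{1}{l}\pmod p$, and the symmetrization of the remaining composition sum are exactly how the proof begins. But there is a genuine gap at the central step. The reduction does \emph{not} land on a sum of the shape appearing in (1.2): symmetrizing $\frac{1}{k_2\cdots k_n}$ over compositions of $l$ forces an extra factor $\frac{1}{l}$ at each stage, and the end result is
$-(n-1)!\sum_{0<l_{n-2}<\cdots<l_1<l<p}\frac{(-1)^{p-l}\chi(p-l)}{l^{2}\,l_1\cdots l_{n-2}}$,
a depth-$(n-1)$ multiple harmonic sum whose \emph{largest} index carries exponent $2$. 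Congruence (1.2) only covers the exponent string $(1,\dots,1)$, so it cannot be applied here, with or without shifted Jacobi symbols. Evaluating the weight-$(1,\dots,1,2)$ twisted sum is precisely the new content of the paper: one differentiates the generating sum $\sum\frac{x^{k_n}}{k_1\cdots k_n^{2}}$, applies the $(1,\dots,1)$ result, and integrates back (Lemma 2.6) to reduce to a double sum $\sum_{0<i_1\le i_2\le p-1}\frac{(1-x)^{i_1}}{i_1 i_2^{\,n-1}}$; this is then split as the full rectangle minus the triangle, the inner power sum $\sum_{j<i}j^{-(n-1)}$ is expanded by Faulhaber's formula (which is where the convolution $\sum_r\binom{p-n+1}{r}B_r(\cdots)$ actually comes from, not from the Fermat substitution as you suggest), and the remaining single sums $\sum_i(-1)^{p+i}\chi(p+i)/i^{m}$ are computed by sorting $i$ into residue classes mod $6$ and invoking the Bernoulli-polynomial values at $1/6,\,1/3,\,1/2$. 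One also needs the evaluation of $H(1,n-1;p-1)$ (Lemma 2.5) to account for the diagonal and the boundary term $B_{p-n}$ in the odd case.

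Your final paragraph about collapsing $B_m(1/6)+B_m(5/6)$ to $(2^{1-m}-1)(3^{1-m}-1)B_m$ is correct and is indeed the source of the factor $(3^{n+r-2}-1)(2^{n+r-2}-1)/6^{n+r-2}$, and your remark that odd Bernoulli numbers kill the convolution when $n$ is odd is in the right spirit (note the parity of the surviving case is opposite to that of Theorem 1.2, because the twist $\omega^{i}+\omega^{-i}$ is even where $\left(\frac{i}{3}\right)$ is odd). So the endgame is sound; what is missing is the entire mechanism for handling the squared top index, without which the plan cannot be executed as stated.
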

	 \section{Basic lemmas}
	 In this section, we introduce the following lemmas that will be used later to prove the  theorems.   
	  \begin{lemma}[See \cite{S3}]
	 	Suppose that $k$, $p \in \mathbb {N}$ with $p>1$. If $x$, $y \in \mathbb{Z}_p$, then $p B_k(x) \in \mathbb{Z}_p$ and $\left(B_k(x)-\right.$ $\left.B_k(y)\right) / k \in \mathbb{Z}_p$. If $p$ is an odd prime such that $p-1 \nmid k$, then $B_k(x) / k \in \mathbb{Z}_p$.
	 \end{lemma}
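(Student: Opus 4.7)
The plan is to verify the three integrality statements in turn, using the expansion $B_k(x)=\sum_{j=0}^{k}\binom{k}{j}B_j\,x^{k-j}$, the difference equation $B_k(x+1)-B_k(x)=kx^{k-1}$, and the von Staudt--Clausen theorem. None of these steps is long on its own; the interest lies in combining them cleanly.

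For the claim $pB_k(x)\in\mathbb{Z}_p$, first invoke von Staudt--Clausen: the denominator of $B_j$ is a squarefree product of primes $q$ with $(q-1)\mid j$, each to the first power, so $pB_j\in\mathbb{Z}_p$ for every $j\ge 0$. Since binomial coefficients are integers and $x^{k-j}\in\mathbb{Z}_p$, the expansion $pB_k(x)=\sum_{j}\binom{k}{j}(pB_j)\,x^{k-j}$ shows the result. For the claim $(B_k(x)-B_k(y))/k\in\mathbb{Z}_p$, I would iterate the difference equation to obtain $B_k(x+m)-B_k(x)=k\sum_{i=0}^{m-1}(x+i)^{k-1}$ for positive integers $m$, so the quotient by $k$ lies in $\mathbb{Z}_p$ whenever $x\in\mathbb{Z}_p$. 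Fixing $x\in\mathbb{Z}_p$ and approximating $y-x\in\mathbb{Z}_p$ by positive integers $m_n\to y-x$ in the $p$-adic topology, continuity of the polynomial $B_k$ together with closedness of $\mathbb{Z}_p\subset\mathbb{Q}_p$ transfers integrality to the limit $(B_k(y)-B_k(x))/k$.

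For the sharper statement $B_k(x)/k\in\mathbb{Z}_p$ under the hypothesis that $p$ is odd and $p-1\nmid k$, the second part (with $y=0$) reduces the problem to showing $B_k/k\in\mathbb{Z}_p$. Write $k=p^a m$ with $p\nmid m$; for $a=0$ this is immediate from von Staudt--Clausen, which gives $B_k\in\mathbb{Z}_p$. For $a\ge 1$ the proof invokes Kummer's congruence: choosing a small integer $k'$ with $k'\equiv k\pmod{p^{a-1}(p-1)}$, $(p-1)\nmid k'$, and $p\nmid k'$, one has $(1-p^{k-1})B_k/k\equiv (1-p^{k'-1})B_{k'}/k'\pmod{p^a}$, and since the right-hand side lies in $\mathbb{Z}_p$ (by the $a=0$ case applied to $k'$), so does the left-hand side; the Euler factor $1-p^{k-1}$ is a $p$-adic unit, hence $B_k/k\in\mathbb{Z}_p$. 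Equivalently, one may quote the fact that $-(1-p^{k-1})B_k/k=\zeta_p(1-k)$ is a $p$-adic $L$-value, which lies in $\mathbb{Z}_p$ whenever $k\not\equiv 0\pmod{p-1}$.

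The main obstacle is the third assertion: integrality of $B_k/k$ (rather than just $B_k$) is not a consequence of the recursive definition and the expansion formula alone, and necessarily uses a nontrivial input — either Kummer's congruences, the Kubota--Leopoldt $p$-adic $L$-function, or a direct manipulation of Faulhaber's formula $\sum_{i=1}^{p-1}i^{k-1}=(B_k(p)-B_k)/k$ combined with a careful $p$-adic valuation count on the right side. The first two claims, by contrast, follow essentially mechanically from von Staudt--Clausen and the standard telescoping identity.
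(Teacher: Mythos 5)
The paper itself gives no proof of this lemma: it is quoted verbatim from Z.-H. Sun's work \cite{S3}, so there is no in-paper argument to compare against. Judged on its own terms, your treatment of the first two assertions is correct and is essentially the standard one: von Staudt--Clausen gives $pB_j\in\mathbb{Z}_p$ termwise in the expansion $B_k(x)=\sum_j\binom{k}{j}B_jx^{k-j}$, and the telescoped difference equation $B_k(x+m)-B_k(x)=k\sum_{i=0}^{m-1}(x+i)^{k-1}$ plus density of the positive integers and closedness of $\mathbb{Z}_p$ handles the second claim. One small caveat: the lemma allows an arbitrary integer $p>1$, for which $\mathbb{Z}_p$ must be read as the rationals with denominator prime to $p$; your density argument should then be run separately in $\mathbb{Z}_q$ for each prime $q\mid p$ (or via CRT), which is routine. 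You also correctly reduce the third assertion, via the second with $y=0$, to Adams' theorem that $B_k/k\in\mathbb{Z}_p$ when $p-1\nmid k$.

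The one genuine defect is in your Kummer-congruence route to Adams' theorem. Writing $k=p^am$ with $a\ge 2$, the conditions you impose on $k'$ are contradictory: $k'\equiv k\pmod{p^{a-1}(p-1)}$ forces $p^{a-1}\mid k'$, hence $p\mid k'$, so no $k'$ with $p\nmid k'$ exists. The argument as written only functions for $a=1$; for larger $a$ you would need to induct on $a$ (taking $k'$ with $v_p(k')=a-1$ in the same class modulo $(p-1)p^{a-1}$ and transferring integrality one power of $p$ at a time). There is also a logical-order concern: the usual proofs of the Kummer congruences in the strong form you invoke (as a statement that the difference lies in $p^a\mathbb{Z}_p$, without presupposing integrality of either side) establish Adams' theorem first, so quoting Kummer here risks circularity unless you take the Kubota--Leopoldt construction as the black box. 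The cleanest repair is the third alternative you yourself name: the power-sum identity $\sum_{i=1}^{p-1}i^{k-1}\equiv -1$ or $0\pmod p$ according as $p-1\mid k-1$ or not, combined with $\left(B_k(p)-B_k\right)/k=\sum_{i=1}^{p-1}i^{k-1}$ and a valuation count; this is the standard elementary proof and closes the gap without appeal to deeper machinery.
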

     \begin{lemma}[See \cite{S3}]
     Let $p, m \in \mathbb{N}$ and $k, r \in \mathbb{Z}$ with $k \geqslant 0$. Then
     $$
     \sum_{\substack{x=0 \\ x \equiv r(\bmod m)}}^{p-1} x^k=\frac{m^k}{k+1}\left(B_{k+1}\left(\frac{p}{m}+\left\{\frac{r-p}{m}\right\}\right)-B_{k+1}\left(\left\{\frac{r}{m}\right\}\right)\right).
     $$
     \end{lemma}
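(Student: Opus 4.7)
The plan is to reduce this identity to the classical closed-form evaluation of $\sum_{j=0}^{N-1}(a+j)^k$ in terms of Bernoulli polynomials, together with a bookkeeping argument that identifies the upper argument in the right-hand side. The starting point is the well-known identity
\begin{equation*}
\sum_{j=0}^{N-1}(a+j)^k = \frac{B_{k+1}(a+N)-B_{k+1}(a)}{k+1},
\end{equation*}
which follows from the defining functional equation $B_{k+1}(x+1)-B_{k+1}(x)=(k+1)x^k$ by telescoping.

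Let $s:=m\{r/m\}$, so that $s$ is the unique integer in $[0,m)$ with $s\equiv r\pmod m$. The summation on the left is then exactly over $x=s+mj$ for $j=0,1,\dots,N-1$, where $N$ is the number of integers in $[0,p-1]$ congruent to $r$ modulo $m$; explicitly, $N=\lfloor(p-1-s)/m\rfloor+1$. Factoring out $m^k$ and applying the classical identity with $a=s/m$ yields
\begin{equation*}
\sum_{\substack{x=0\\ x\equiv r(\bmod m)}}^{p-1} x^k
= m^k\sum_{j=0}^{N-1}\left(\tfrac{s}{m}+j\right)^k
= \frac{m^k}{k+1}\Bigl(B_{k+1}\bigl(\tfrac{s}{m}+N\bigr)-B_{k+1}\bigl(\tfrac{s}{m}\bigr)\Bigr).
\end{equation*}
Since $s/m=\{r/m\}$, the second Bernoulli-polynomial term already matches the statement, so the whole problem collapses to showing the identity of real numbers
\begin{equation*}
\frac{s}{m}+N \;=\; \frac{p}{m}+\Bigl\{\frac{r-p}{m}\Bigr\}.
\end{equation*}

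This remaining identity is a routine case split. Writing $p=mq+t$ with $0\le t<m$ and noting that $\{(r-p)/m\}=\{(s-p)/m\}=\{(s-t)/m\}$ (since $r\equiv s\pmod m$), one distinguishes the cases $s\ge t$ and $s<t$: in the first case $\{(s-t)/m\}=(s-t)/m$ and the count $N=q$, while in the second $\{(s-t)/m\}=(s-t+m)/m$ and $N=q+1$. In both cases a direct substitution gives $s/m+N=q+t/m+\{(s-t)/m\}=p/m+\{(r-p)/m\}$. Combining this with the telescoping formula above completes the proof.

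The main (and essentially only) obstacle is the bookkeeping in the last step: one must be careful about the fractional parts, because if $r$ is allowed to be negative or if $p$ fails to be divisible by $m$ then the naive guess $N=(p-s)/m$ is wrong by one depending on the relative size of $s$ and $t$. The case split above handles this uniformly, and everything else is a direct invocation of the standard Bernoulli-polynomial telescoping identity.
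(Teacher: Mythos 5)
The paper does not prove this statement at all: it is Lemma 2.2, quoted from the reference \cite{S3} (Z.-H.~Sun) and used as a black box, so there is no in-paper argument to compare yours against. Judged on its own, your proof is correct and complete. The telescoping identity $\sum_{j=0}^{N-1}(a+j)^k=\bigl(B_{k+1}(a+N)-B_{k+1}(a)\bigr)/(k+1)$ follows immediately from $B_{k+1}(x+1)-B_{k+1}(x)=(k+1)x^k$, and your reduction of the whole lemma to the single real identity $s/m+N=p/m+\{(r-p)/m\}$ is exactly the right move. The case split on $s\ge t$ versus $s<t$ (with $p=mq+t$) checks out: in the first case $t-1-s\in[-m,-1]$ gives $N=q$, in the second $t-1-s\in[0,m-2]$ gives $N=q+1$, and both substitutions close the identity; the degenerate situation $N=0$ (no admissible $x$) is also covered since both sides of the telescoping formula vanish. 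The only point worth flagging is the convention $0^0=1$ when $k=0$ and $r\equiv 0\pmod m$, which your argument implicitly uses via $B_1(1)-B_1(0)=1$; this matches the convention in \cite{S3} and is harmless. In short: a correct, self-contained proof of a result the paper merely cites.
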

 \begin{lemma}[See \cite{MOS}]
 	Let $n \in \mathbb{N}$, then
 	$$
 	B_{2 n}\left(\frac{1}{2}\right)=\left(2^{1-n}-1\right)B_{2n}, \quad 
 	B_{2 n}\left(\frac{1}{4}\right)=B_{2 n}\left(\frac{3}{4}\right)=\frac{2-2^{2 n}}{4^{2 n}} B_{2 n},$$ 
 	and
 	$$B_{2 n}\left(\frac{1}{3}\right)=B_{2 n}\left(\frac{2}{3}\right)=\frac{3-3^{2 n}}{2 \cdot 3^{2 n}} B_{2 n},
 	$$
    $$
 	\quad B_{2 n}\left(\frac{1}{6}\right)=B_{2 n}\left(\frac{5}{6}\right)=\frac{\left(2-2^{2 n}\right)\left(3-3^{2 n}\right)}{2 \cdot 6^{2 n}} B_{2 n} .
 	$$
 	Moreover, through the formulas $B_n(1-x)=(-1)^n B_n(x)$ and $ B_n(ax)=a^{m-1}\sum\limits_{k=0}^{a-1}B_n\left(x+\dfrac{k}{a}\right)$ for $m$, $a>0$, we have
 	$$\begin{aligned}
 		& B_{2n+1}\left(\frac{1}{2}\right)=0, \quad B_{2n+1}\left(\frac{2}{3}\right)=-B_{2n+1}\left(\frac{1}{3}\right), \\
 		& B_{2n+1}\left(\frac{1}{6}\right)=-B_{2n+1}\left(\frac{5}{6}\right)=\left(1+2^{1-(2n+1)}\right) B_{2n+1}\left(\frac{1}{3}\right) .
 	\end{aligned}$$
 \end{lemma}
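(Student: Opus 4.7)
The plan is to derive every identity in this lemma from the two classical relations already quoted in the statement: the reflection formula $B_n(1-x) = (-1)^n B_n(x)$ and the Raabe (multiplication) formula $B_n(ax) = a^{n-1}\sum_{k=0}^{a-1} B_n(x+k/a)$. Together with the standard facts $B_n(1) = B_n$ for $n \ne 1$ and $B_{2m+1} = 0$ for $m \ge 1$, these are enough to produce each equality by elementary linear algebra. There is no genuine obstacle here; the only care needed is bookkeeping of the powers of $2$ and $3$ that appear.

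I would treat the cases in order of increasing complexity. For $B_{2n}(1/2)$, Raabe at $a=2$, $x=1/2$ gives $B_m(1) = 2^{m-1}(B_m(1/2) + B_m(1))$, so $B_m(1/2) = (2^{1-m}-1) B_m$, which specializes at $m=2n$ to the stated value. The equality $B_{2n}(1/3) = B_{2n}(2/3)$ is immediate from reflection; applying Raabe with $a=3$, $x=1/3$ then yields $B_{2n} = 3^{2n-1}(B_{2n}(1/3) + B_{2n}(2/3) + B_{2n})$, and the symmetry collapses this to a linear equation solved by $B_{2n}(1/3) = (3 - 3^{2n})/(2\cdot 3^{2n}) \cdot B_{2n}$.

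For $B_{2n}(1/4)$, Raabe with $a=2$, $x=1/4$ produces $B_{2n}(1/2) = 2^{2n-1}(B_{2n}(1/4) + B_{2n}(3/4))$; reflection merges the right-hand side into $2^{2n} B_{2n}(1/4)$, and substituting the value of $B_{2n}(1/2)$ from the first step gives the asserted $(2-2^{2n})/4^{2n} \cdot B_{2n}$. The identity for $B_{2n}(1/6)$ follows by the same template: Raabe with $a=2$, $x=1/6$ gives $B_{2n}(1/3) = 2^{2n-1}(B_{2n}(1/6) + B_{2n}(2/3))$, and $B_{2n}(2/3) = B_{2n}(1/3)$ expresses $B_{2n}(1/6)$ as $(2 - 2^{2n})/2^{2n} \cdot B_{2n}(1/3)$; inserting the earlier formula for $B_{2n}(1/3)$ then yields the stated product $(2-2^{2n})(3-3^{2n})/(2\cdot 6^{2n}) \cdot B_{2n}$.

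For the odd-index statements, $B_{2n+1}(1/2) = 0$ is the same general formula $B_m(1/2) = (2^{1-m}-1) B_m$ applied at $m=2n+1$: for $n \ge 1$ it vanishes because $B_{2n+1} = 0$, and for $n=0$ a direct evaluation $B_1(1/2) = 1/2 - 1/2 = 0$ handles the remaining case. Both $B_{2n+1}(2/3) = -B_{2n+1}(1/3)$ and $B_{2n+1}(5/6) = -B_{2n+1}(1/6)$ are instantaneous from reflection. Finally, Raabe at $a=2$, $x=1/6$ with index $2n+1$ reads $B_{2n+1}(1/3) = 2^{2n}(B_{2n+1}(1/6) + B_{2n+1}(2/3))$; substituting $B_{2n+1}(2/3) = -B_{2n+1}(1/3)$ and solving gives $B_{2n+1}(1/6) = (1 + 2^{-2n}) B_{2n+1}(1/3)$, which is exactly $(1 + 2^{1-(2n+1)}) B_{2n+1}(1/3)$, as claimed.
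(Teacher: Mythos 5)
Your proof is correct and complete, but it is worth noting that the paper itself offers no argument for this lemma at all: it is simply cited from the handbook \cite{MOS}. What you have done is supply a self-contained derivation from exactly the two functional equations quoted in the statement (reflection and the Raabe multiplication formula), plus $B_m(1)=B_m$ for $m\neq 1$ and the vanishing of odd-index Bernoulli numbers. Each of your computations checks out: Raabe at $(a,x)=(2,\tfrac12)$ gives the half-integer value; reflection plus Raabe at $(3,\tfrac13)$ gives the third-integer value; the quarter- and sixth-integer values follow by the two-term Raabe step you describe, and the odd-index identities fall out of reflection together with the $(2,\tfrac16)$ instance. One small point you should make explicit rather than gloss over: your (correct) general formula $B_m(1/2)=(2^{1-m}-1)B_m$ specializes at $m=2n$ to $B_{2n}(1/2)=(2^{1-2n}-1)B_{2n}$, whereas the lemma as printed says $(2^{1-n}-1)B_{2n}$; the exponent $1-n$ in the paper is a typo (check $n=1$: $B_2(1/2)=-1/12=(2^{-1}-1)B_2$, not $0$), so you are proving the corrected statement, not "the stated value" verbatim. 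The upside of your approach over the paper's bare citation is that the reader sees precisely which properties of Bernoulli polynomials are being used, and that the constants $2-2^{2n}$, $3-3^{2n}$ arise mechanically from solving one linear equation at a time; the cost is a page of bookkeeping that the authors chose to outsource.
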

 \begin{lemma}[See \cite{MT}]
 	Let $n$ be a positive integer and let $p$ be a prime with $p>n+1$. Then we have the polynomial congruence
 	$$
 	\sum\limits_{0<k_1<\cdots<k_n\le p-1}\frac{x^{k_n}}{k_{1}\cdots k_n} \equiv(-1)^{n-1} \sum_{k=1}^{p-1} \frac{(1-x)^k}{k^n} \left(\bmod p\right).
 	$$
 \end{lemma}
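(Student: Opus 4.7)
The plan is to prove the polynomial congruence by induction on $n$, after observing that both sides satisfy the same first-order linear recursion modulo $p$. Set
$G_n(x) := \sum_{0<k_1<\cdots<k_n\le p-1}\frac{x^{k_n}}{k_1\cdots k_n}$
for the left side and
$\tilde F_n(x) := (-1)^{n-1}\sum_{k=1}^{p-1}\frac{(1-x)^k}{k^n}$
for the right side, both regarded as polynomials with $p$-integral rational coefficients.

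For the base case $n=1$, I would expand $(1-x)^k$ by the binomial theorem and swap the order of summation in $\tilde F_1$. The $j=0$ contribution $\sum_{k=1}^{p-1}1/k$ vanishes modulo $p$ by the standard power sum congruence, and for each $j\ge 1$ the rewriting $\binom{k}{j}/k = \binom{k-1}{j-1}/j$ combined with the hockey-stick identity $\sum_{k=j}^{p-1}\binom{k-1}{j-1} = \binom{p-1}{j} \equiv (-1)^j \pmod p$ collapses $\tilde F_1(x)$ onto $G_1(x)$ coefficient by coefficient.

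For the induction step I would compare the formal derivatives. Direct differentiation yields the clean identity $(1-x)\tilde F_n'(x) = \tilde F_{n-1}(x)$. For $G_n'(x)=\sum_{k=1}^{p-1}x^{k-1}H_{k-1}(\underbrace{1,\ldots,1}_{n-1})$ I would multiply by $1-x$ and telescope using $H_{k-1}(1^{n-1}) - H_{k-2}(1^{n-1}) = H_{k-2}(1^{n-2})/(k-1)$; the interior of the telescope reproduces $G_{n-1}(x)$, while the boundary at $k=p-1$ contributes a lone multiple of $x^{p-1}$ whose coefficient is a combination of the values $H_{p-2}(1^m)$. Newton's identities applied to the power sum vanishing $\sum_{k=1}^{p-1}k^{-r}\equiv 0\pmod p$ (for $1\le r\le p-2$) give $H_{p-1}(1^m)\equiv 0 \pmod p$, whence $H_{p-2}(1^m)\equiv 1\pmod p$ uniformly in the admissible range, which kills the boundary contribution and leaves $(1-x)G_n'(x)\equiv G_{n-1}(x)\pmod p$.

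Granting the inductive hypothesis $G_{n-1}\equiv \tilde F_{n-1} \pmod p$, the two recursions now coincide, so $(1-x)(G_n-\tilde F_n)'\equiv 0 \pmod p$; since $1-x$ is not a zero divisor in $\mathbb{F}_p[x]$ we conclude $G_n \equiv \tilde F_n$ up to an additive constant in $\mathbb{F}_p$, and evaluating at $x=0$ gives $G_n(0)=0$ and $\tilde F_n(0)\equiv 0 \pmod p$ by the same power sum congruence (which applies thanks to the hypothesis $p>n+1$, forcing $n\le p-2$). The constant is therefore zero and the induction closes. The main technical obstacle I anticipate is the bookkeeping of the boundary term at $k=p-1$ in the telescoped expression for $(1-x)G_n'(x)$: verifying its mod-$p$ vanishing is the only place where a nontrivial congruence, namely the Wolstenholme-type vanishing $H_{p-1}(1^m)\equiv 0$ via Newton's identities, is genuinely used; everything else is routine formal manipulation of polynomials.
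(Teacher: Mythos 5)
The paper does not prove this statement; Lemma 2.4 is imported verbatim from Mattarei--Tauraso \cite{MT}, so there is no in-paper proof to compare against. Your argument is, however, correct and self-contained, and it is pleasingly consistent in spirit with how the paper itself uses the lemma: the proof of Lemma 2.6 applies exactly your "differentiate, match the derivative, integrate back, fix the constant at $x=0$" template one level up. Your base case is right ($\binom{k}{j}/k=\binom{k-1}{j-1}/j$, hockey stick, $\binom{p-1}{j}\equiv(-1)^j$), the recursion $(1-x)\tilde F_n'=\tilde F_{n-1}$ is an exact identity, and the telescoping of $(1-x)G_n'$ does produce $G_{n-1}$ plus an $x^{p-1}$ boundary term equal (mod $p$) to $H_{p-2}(1^{n-2})-H_{p-2}(1^{n-1})$, which vanishes once you know $H_{p-2}(1^m)\equiv 1$. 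Two small points deserve to be made explicit if you write this up. First, the step from $H_{p-1}(1^m)\equiv 0$ to $H_{p-2}(1^m)\equiv 1$ is not immediate; it needs the one-line descent $H_{p-2}(1^m)=H_{p-1}(1^m)-\tfrac{1}{p-1}H_{p-2}(1^{m-1})\equiv H_{p-2}(1^{m-1})$, iterated down to the empty product. Second, "zero derivative implies constant" is false in $\mathbb{F}_p[x]$ in general (consider $x^p$); you must invoke the fact that both $G_n$ and $\tilde F_n$ have degree at most $p-1$, so the only monomial killed by differentiation is the constant term. With those two remarks added, the proof is complete, and it is arguably more elementary than the citation, since it uses nothing beyond the power-sum congruence $\sum_{k=1}^{p-1}k^{-r}\equiv 0$ and Newton's identities.
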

 \begin{lemma}[See \cite{TZ}]
  Let $a$, $b \in \mathbb{N}$ and a prime $p \ge a+b+2$. If $a+b$ is odd then we have $$
  \sum\limits_{0<k_1 \le k_2\le p-1}\frac{1}{k_{1}^{a}k_{2}^{b}} \equiv H(a, b; p-1) \equiv \frac{(-1)^b}{a+b}\left(\begin{array}{c}
  	a+b\\
  	a 
  \end{array}\right)B_{p-a-b}\pmod p.$$
 If $a+b$ is even then we have$$
 \sum\limits_{0<k_1 \le k_2\le p-1}\frac{1}{k_{1}^{a}k_{2}^{b}} \equiv H(a, b; p-1) \equiv 0 \pmod p.$$
 \end{lemma}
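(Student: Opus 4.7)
The plan is to evaluate $H(a,b;p-1)\pmod p$ directly via Fermat's little theorem and Faulhaber's formula. The non-strict sum appearing in the lemma differs from the strict $H(a,b;p-1)$ only by the diagonal $\sum_{k=1}^{p-1}1/k^{a+b}$, which is congruent to $0\pmod p$: under the hypothesis $p\ge a+b+2$ the exponent $a+b$ lies strictly between $1$ and $p-1$, so the standard power-sum criterion gives $\sum_{k=1}^{p-1}k^{-(a+b)}\equiv\sum_{k=1}^{p-1}k^{p-1-(a+b)}\equiv 0\pmod p$. Hence it suffices to evaluate $H(a,b;p-1)$.

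First I would rewrite
\[
H(a,b;p-1)=\sum_{k=1}^{p-1}\frac{1}{k^{b}}\sum_{i=1}^{k-1}\frac{1}{i^{a}}\equiv\sum_{k=1}^{p-1}\frac{1}{k^{b}}\sum_{i=1}^{k-1}i^{p-1-a}\pmod p
\]
using Fermat, and then apply Faulhaber's identity $\sum_{i=1}^{k-1}i^{p-1-a}=(B_{p-a}(k)-B_{p-a})/(p-a)$ together with $B_{p-a}(k)=\sum_{j=0}^{p-a}\binom{p-a}{j}B_{j}k^{p-a-j}$. Swapping the order of summation yields
\[
H(a,b;p-1)\equiv\frac{1}{p-a}\sum_{j=0}^{p-1-a}\binom{p-a}{j}B_{j}\sum_{k=1}^{p-1}k^{p-a-j-b}\pmod p.
\]

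Next I would invoke the classical evaluation that $\sum_{k=1}^{p-1}k^{e}\equiv -1\pmod p$ when $(p-1)\mid e$ and $0$ otherwise. In the range $0\le j\le p-1-a$, the hypothesis $p\ge a+b+2$ ensures that $j=p-a-b$ is the only solution of $(p-1)\mid(p-a-j-b)$, so only this term survives. Combining $\binom{p-a}{b}\equiv(-1)^{b}\binom{a+b-1}{b}\pmod p$, $1/(p-a)\equiv -1/a\pmod p$, and the binomial identity $\tfrac{1}{a}\binom{a+b-1}{b}=\tfrac{1}{a+b}\binom{a+b}{a}$ gives
\[
H(a,b;p-1)\equiv\frac{(-1)^{b}}{a+b}\binom{a+b}{a}B_{p-a-b}\pmod p.
\]
When $a+b$ is odd this is the first branch of the lemma, and when $a+b$ is even $p-a-b$ is odd and $\ge 3$, forcing $B_{p-a-b}=0$ and reproducing the second branch.

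The main obstacle is controlling $p$-integrality throughout the computation. One must verify that every Bernoulli number $B_{j}$ in the expansion of $B_{p-a}(k)$ is $p$-integral (automatic by von Staudt--Clausen, since $1\le j\le p-1-a<p-1$ forces $p-1\nmid j$, and $B_{0}=1$), that the factor $B_{p-a-b}/(p-a)$ is $p$-integral (the bound $p\ge a+b+2$ prevents the obstructions $a+b=1$ and $a+b=p$), and that the interchange of summations and the reductions modulo $p$ are valid. A further subtlety is checking that no second solution of $(p-1)\mid(p-a-j-b)$ exists in the summation range, which again reduces to the bound $p\ge a+b+2$. Once these verifications are in place, the computation above produces the closed form in the lemma.
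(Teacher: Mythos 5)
The paper does not prove this lemma at all --- it is quoted verbatim from \cite{TZ} --- so there is no internal argument to compare against; what you have written is a correct, self-contained derivation along the standard lines of the cited source. Your key steps all check out: the diagonal term $\sum_{k=1}^{p-1}k^{-(a+b)}\equiv\sum_{k=1}^{p-1}k^{p-1-(a+b)}$ vanishes because $2\le a+b\le p-2$ forces $p-1\nmid(a+b)$; in the Faulhaber expansion the exponent $e=p-a-j-b$ ranges over $[1-b,\,p-a-b]$, and since $a+b\ge 2$ and $b\le p-2$ the only multiple of $p-1$ in that interval is $e=0$, attained exactly at $j=p-a-b$, which does lie in $[0,p-1-a]$; von Staudt--Clausen guarantees $p$-integrality of every $B_j$ appearing since $0<j\le p-1-a<p-1$; and the final bookkeeping $\tfrac{1}{p-a}\binom{p-a}{b}(-1)B_{p-a-b}\equiv\tfrac{(-1)^b}{a+b}\binom{a+b}{a}B_{p-a-b}$ is right, with the even case following from $B_{p-a-b}=0$ for $p-a-b$ odd and $\ge 3$. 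Two cosmetic points you should tidy in a final write-up: interpret the negative exponents $k^{e}$ with $e<0$ explicitly as inverses modulo $p$ (or shift by $p-1$) before invoking the power-sum criterion, and note that the term $j=p-a$ in the expansion of $B_{p-a}(k)$ cancels exactly against the subtracted $B_{p-a}$, which is why your sum legitimately stops at $j=p-1-a$.
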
 
\begin{lemma}
	Let $n$ be a positive integer and $p>n+1$ a prime, then we get
	\begin{multline}\notag
		\sum\limits_{0<k_1<\cdots<k_n\le p-1}\frac{x^{k_n}}{k_{1}\cdots k_n^{2}} \equiv \begin{cases}  \sum\limits_{0<i_1 \le i_2\le p-1}\dfrac{(1-x)^{i_1}}{i_1i_2^n}\pmod p & \text {if $2 \nmid n$} , \\
		- \sum\limits_{0<i_1 \le i_2\le p-1}\dfrac{(1-x)^{i_1}}{i_1i_2^n}+B_{p-1-n}\pmod p &  \text {if $2 \mid n$} .\end{cases}
	\end{multline}
\end{lemma}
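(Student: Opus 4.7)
Write $T_n(x)=\sum_{0<k_1<\cdots<k_n<p}x^{k_n}/(k_1\cdots k_n)$ for the polynomial appearing in Lemma 2.4, and let $S_n(x)$ denote the polynomial on the left-hand side of Lemma 2.6. A direct differentiation gives the identity $T_n(x)=x S_n'(x)$, so $S_n$ is recovered from $T_n/x$ by the term-by-term operation $x^{r-1}\mapsto x^r/r$. Since every exponent $r\in\{1,\ldots,p-1\}$ is invertible mod $p$, this is a legitimate polynomial operation in $\mathbb{F}_p[x]$ on polynomials of degree at most $p-1$; denote it by $\int_0^x(\cdot)\,dt$. Then $S_n(x)\equiv\int_0^x T_n(t)/t\,dt\pmod p$.

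Substituting Lemma 2.4 and using $\sum_{k=1}^{p-1}k^{-n}\equiv 0\pmod p$ to legitimately replace $(1-t)^k$ by $(1-t)^k-1$ (so that division by $t$ makes sense) gives
$$S_n(x)\equiv(-1)^{n-1}\sum_{k=1}^{p-1}\frac{1}{k^n}\int_0^x\frac{(1-t)^k-1}{t}\,dt\pmod p.$$
The inner integral is evaluated by the elementary identity $\int_0^x\bigl((1-t)^k-1\bigr)/t\,dt=-\sum_{j=1}^k\bigl(1-(1-x)^j\bigr)/j$, which is verified by induction on $k$. Interchanging the $k$ and $j$ sums and applying $\sum_{k=j}^{p-1}k^{-n}\equiv-\sum_{k=1}^{j-1}k^{-n}\pmod p$ brings the result to the shape
$$S_n(x)\equiv(-1)^{n-1}\left[\sum_{0<k<j<p}\frac{1}{j\,k^n}-\sum_{0<k<j<p}\frac{(1-x)^j}{j\,k^n}\right]\pmod p.$$

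Finally, identify the two pieces with the right-hand side of Lemma 2.6. Set $F(x):=\sum_{0<i_1\le i_2<p}(1-x)^{i_1}/(i_1 i_2^n)$; a second application of the symmetrization $\sum_{i_2=i_1}^{p-1}i_2^{-n}\equiv-\sum_{i_2<i_1}i_2^{-n}\pmod p$ yields $F(x)\equiv-\sum_{0<k<j<p}(1-x)^j/(j\,k^n)$, so the variable piece of the expression above equals $(-1)^{n-1}F(x)$ (namely $F(x)$ for $n$ odd and $-F(x)$ for $n$ even). For the constant-in-$x$ piece, Lemma 2.5 with $a=n$, $b=1$ (after the standard adjustment from $\le$ to $<$ via $\sum_{k=1}^{p-1}k^{-(n+1)}\equiv 0\pmod p$) gives $\sum_{0<k<j<p}1/(j k^n)\equiv 0$ when $n$ is odd and $\equiv-B_{p-n-1}$ when $n$ is even; assembling the signs then reproduces the two cases of the lemma. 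The only delicate point of the argument is the justification of the formal mod-$p$ integration: every time it threatens to fail, the offending boundary constant is of the form $\sum_{k=1}^{p-1}k^{-m}$ with $1\le m\le p-2$, which vanishes mod $p$ in the assumed range $p>n+1$, so the bookkeeping goes through cleanly.
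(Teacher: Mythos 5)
Your argument is correct and follows essentially the same route as the paper's proof: differentiate, invoke Lemma 2.4, formally antidifferentiate mod $p$ (fixing the constant at $x=0$), use the identity $\int_0^x((1-t)^k-1)/t\,dt=\sum_{i=1}^k((1-x)^i-1)/i$, and finish with Lemma 2.5 applied to $H(n,1;p-1)$. The only difference is cosmetic: you pass through the strict sum $\sum_{0<k<j<p}$ and back via $\sum_{k\ge j}\equiv-\sum_{k<j}$, whereas the paper lands directly on the $\le$-indexed double sum.
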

\begin{proof}
	Taking the derivative of the left-hand side  above  and applying Lemma 2.4, we have 
	\begin{align*}
	&x\frac{d}{dx}\left(	\sum\limits_{0<k_1<\cdots<k_n\le p-1}\frac{x^{k_n}}{k_{1}\cdots k_n^{2}}\right)\\
	\quad {}&=	\sum\limits_{0<k_1<\cdots<k_n\le p-1}\frac{x^{k_n}}{k_{1}\cdots k_n}\equiv (-1)^{n-1} \sum_{k=1}^{p-1} \frac{(1-x)^k}{k^n} \pmod p.
	\end{align*} 
  Observe that 
 \begin{align*}
 (-1)^{n-1} \sum_{k=1}^{p-1} \frac{(1-x)^k}{k^n}& =(-1)^{n-1} \sum_{k=1}^{p-1} \frac{1}{k^n}\sum\limits_{r=0}^{k}(-x)^k\left(\begin{array}{c}
 	k\\
 r
 \end{array}\right)\\
&=(-1)^{n-1} \sum_{k=1}^{p-1} \frac{1}{k^n}+(-1)^{n-1} \sum_{k=1}^{p-1} \frac{1}{k^n}\sum\limits_{r=1}^{k}(-x)^r\left(\begin{array}{c}
	k\\
	r
\end{array}\right)\\
& \equiv (-1)^{n-1} \sum_{k=1}^{p-1} \frac{1}{k^n}\sum\limits_{r=1}^{k}(-x)^r\left(\begin{array}{c}
	k\\
	r
\end{array}\right)\pmod p,
 \end{align*}
where we use the congruence $ \sum\limits_{k=1}^{p-1} \dfrac{1}{k^n} \equiv 0 \pmod p$ (see \cite{S1}). Hence we can easily get$$
x\frac{d}{dx}\left(	\sum\limits_{0<k_1<\cdots<k_n\le p-1}\frac{x^{k_n}}{k_{1}\cdots k_n^{2}}-(-1)^{n-1} \sum_{k=1}^{p-1} \frac{1}{k^n}\sum\limits_{r=1}^{k}\frac{(-x)^r}{r}\left(\begin{array}{c}
	k\\
	r
\end{array}\right)\right) \equiv 0 \pmod p.$$
Thus $\sum\limits_{0<k_1<\cdots<k_n\le p-1}\dfrac{x^{k_n}}{k_{1}\cdots k_n^{2}}-(-1)^{n-1} \sum\limits_{k=1}^{p-1} \dfrac{1}{k^n}\sum\limits_{r=1}^{k}\dfrac{(-x)^r}{r}\left(\begin{array}{c}
	k\\
	r
\end{array}\right)\equiv c \pmod p$ for some constant $c$. Replacing $x$ with $0$ we obtain
\begin{equation}
\sum\limits_{0<k_1<\cdots<k_n\le p-1}\frac{x^{k_n}}{k_{1}\cdots k_n^{2}}\equiv (-1)^{n-1} \sum_{k=1}^{p-1} \frac{1}{k^n}\sum\limits_{r=1}^{k}\frac{(-x)^r}{r}\left(\begin{array}{c}
	k\\
	r
\end{array}\right) \pmod p.
\end{equation}
Since $\int_{0}^{1}u^{r-1}du=\dfrac{1}{r}$, taking $v=1-xu$ we see that$$
\begin{aligned}
	\sum_{r=1}^{k}  \frac{(-x)^{r}}{r}\left(\begin{array}{l}
		k \\
		r
	\end{array}\right)& =\int_0^1 \sum_{r=1}^{k}(-x)^{r}\left(\begin{array}{l}
		k \\
		r
	\end{array}\right) u^{r-1} d u=\int_0^1 \frac{(1-x u)^k-1}{u} d u \\
	& =\int_1^{1-x}\sum_{i=1}^k v^{i-1} d v 
	 =\sum_{i=1}^{k} \frac{(1-x)^i-1}{i}.
\end{aligned}$$
Combining the identity above with (2.1) and Lemma 2.5, we complete the proof of Lemma 2.6.
\end{proof}	
\begin{xrem}
\rm{If we set $n=1, x=2 $ in Lemma 2.6, we can easily get $$
	\sum\limits_{0<i \le j\le p-1}\dfrac{(-1)^{i}}{ij}\equiv H(-1,1; p-1) \equiv -{{q_p}^2(2)} \pmod p
	$$ by using the congruence  $\sum\limits_{k=1}^{p-1} \dfrac{2^k}{k^2} \equiv-{{q_p}^2(2)}
	\pmod p$ (see \cite{G}), where ${q_p(2)}$ is Euler's quotient of 2 with base $p$.  }
\end{xrem}
\begin{lemma}
Let $n$ be a positive integer and $p$ be a prime with $p > \max(n+1, 3)$, then we have
\begin{equation*}
\sum\limits_{i=1}^{p-1}\frac{(-1)^{(p+i)}\left(\frac{p+i}{3}\right)}{i^n} \equiv \begin{cases}-\dfrac{2^{n+1}+4}{n 6^n} B_{p-n}(1 / 3)\pmod p & \text {if $2 \mid n$} , \\
0 \pmod p
 &  \text {if $2 \nmid n$} .\end{cases}
\end{equation*}
\end{lemma}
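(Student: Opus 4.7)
The strategy is to reduce the sum modulo $p$ to a power sum via Fermat's little theorem, split by residues modulo $6$ (the common period of $(-1)^{i}$ and $\left(\tfrac{p+i}{3}\right)$ as functions of $i$), and evaluate each arithmetic progression with Lemma 2.2. Since $p$ is odd, $(-1)^{p+i}=-(-1)^{i}$, and Fermat gives $1/i^{n}\equiv i^{p-1-n}\pmod{p}$. Writing $f(r)=(-1)^{r}\left(\tfrac{p+r}{3}\right)$, the target sum satisfies
\begin{equation*}
\sum_{i=1}^{p-1}\frac{(-1)^{p+i}\left(\tfrac{p+i}{3}\right)}{i^{n}}\equiv -\sum_{r=0}^{5}f(r)\sum_{\substack{0\le i\le p-1\\ i\equiv r(\bmod 6)}}i^{p-1-n}\pmod{p}.
\end{equation*}
Applying Lemma 2.2 with $m=6$ and $k=p-1-n$, and simplifying $\tfrac{6^{p-1-n}}{p-n}\equiv -\tfrac{1}{n\cdot 6^{n}}\pmod{p}$ by Fermat, each inner sum equals $-\tfrac{1}{n\cdot 6^{n}}\bigl(B_{p-n}(\tfrac{p}{6}+\{\tfrac{r-p}{6}\})-B_{p-n}(\{\tfrac{r}{6}\})\bigr)$ modulo $p$.

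The first technical point is the shift identity $B_{p-n}(p/6+\alpha)\equiv B_{p-n}(\alpha)\pmod{p}$ for $\alpha\in\{0,\tfrac{1}{6},\tfrac{1}{3},\tfrac{1}{2},\tfrac{2}{3},\tfrac{5}{6}\}$. Expanding
\begin{equation*}
B_{p-n}(\alpha+p/6)-B_{p-n}(\alpha)=\sum_{k=0}^{p-n-1}\binom{p-n}{k}B_{k}(\alpha)(p/6)^{p-n-k},
\end{equation*}
each term carries a factor $(p/6)^{p-n-k}$ of positive $p$-adic valuation; since $0\le k\le p-n-1<p-1$ forces $p-1\nmid k$, Lemma 2.1 gives $B_{k}(\alpha)\in\mathbb{Z}_{p}$, so the entire difference lies in $p\mathbb{Z}_{p}$.

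With this reduction, it remains to evaluate the combination $\sum_{r}f(r)\bigl(B_{p-n}(\{\tfrac{r-p}{6}\})-B_{p-n}(\{\tfrac{r}{6}\})\bigr)$ modulo $p$, which I would handle by splitting into the two subcases $p\equiv 1\pmod{6}$ and $p\equiv 5\pmod{6}$ (both give the same final answer). In each subcase, the explicit values of $\{(r-p)/6\}$ together with the reflection $B_{p-n}(1-x)=(-1)^{p-n}B_{p-n}(x)$ from Lemma 2.3 collapse the expression into a combination of $B_{p-n}(1/6)$ and $B_{p-n}(1/3)$. For odd $n$, $p-n$ is even, so $B_{p-n}(1-x)=B_{p-n}(x)$ and the nonzero terms cancel pairwise, giving $0\pmod{p}$. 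For even $n$, $p-n$ is odd, and the identity $B_{p-n}(1/6)=(1+2^{1-(p-n)})B_{p-n}(1/3)$ from Lemma 2.3 combined with $2^{1-(p-n)}\equiv 2^{n}\pmod{p}$ via Fermat produces the predicted constant $-\tfrac{2^{n+1}+4}{n\cdot 6^{n}}B_{p-n}(1/3)\pmod{p}$. The main obstacle is the $p$-adic bookkeeping in the Bernoulli shift identity; once that is in hand, the remainder is a routine case analysis streamlined by Lemma 2.3.
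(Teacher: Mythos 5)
Your proposal matches the paper's proof in essentially every step: both reduce $1/i^{n}$ to $i^{p-1-n}$ by Fermat/Euler, evaluate the residue-class sums modulo $6$ via Lemma 2.2, dispose of the shift $B_{p-n}(p/6+\alpha)\equiv B_{p-n}(\alpha)\pmod p$ using the Bernoulli addition formula together with Lemma 2.1, and then run the case analysis over $p\equiv\pm 1\pmod 6$ with the special values from Lemma 2.3. The argument is correct and no further comparison is needed.
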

\begin{proof}
	For prime  $p > \max(n+1, 3),$ according to  Euler's theorem and Lemma 2.2  we have $$
	\begin{aligned}
	 \sum_{\substack{i=1 \\ i \equiv r(\bmod 6)}}^{p-1} \frac{1}{i^n} \equiv& \sum_{\substack{i=1 \\ i \equiv r(\bmod 6)}}^{p-1}i^{p-n-1}\\
	 \equiv& -\frac{1}{n6^n}\left(B_{p-n}\left(\frac{p}{6}+\left\{\frac{r-p}{6}\right\}\right)-B_{p-n}\left(\left\{\frac{r}{6}\right\}\right)\right)\pmod p,	
	\end{aligned},
	$$where $\{x\}$ is the decimal part of $x$. By using the following  formula for Bernoulli polynomials
	$$
	B_n(x+y)=\sum_{r=0}^n\left(\begin{array}{l}n \\ r\end{array}\right) B_{n-r}(y) x^r$$
	 and Lemma 2.1, we have 
	 \begin{equation}
	 \sum_{\substack{i=1 \\ i \equiv r(\bmod 6)}}^{p-1} \frac{1}{i^n} \equiv \frac{1}{n6^n}\left(B_{p-n}\left(\left\{\frac{r}{6}\right\}\right)-B_{p-n}\left(\left\{\frac{r-p}{6}\right\}\right)\right)\pmod p.
	 \end{equation}\par
	  On the other hand, $(-1)^{(p+i)}(\dfrac{p+i}{3})$ takes the values $0, -1, -1, 0, 1, 1$ according as $p+i \equiv 0, 1, 2, 3, 4, 5\pmod 6.$ When $n$ is even, $p \equiv 1 \pmod 6$, we have
	  $$\sum\limits_{i=1}^{p-1}\frac{(-1)^{(p+i)}\left(\frac{p+i}{3}\right)}{i^n}= -\sum_{\substack{i=1 \\ i \equiv 0\pmod 6}}^{p-1} \frac{1}{i^n}-\sum_{\substack{i=1 \\ i \equiv 1\pmod 6}}^{p-1} \frac{1}{i^n}+ \sum_{\substack{i=1 \\ i \equiv 3\pmod 6}}^{p-1} \frac{1}{i^n}+ \sum_{\substack{i=1 \\ i \equiv 4\pmod 6}}^{p-1} \frac{1}{i^n}.$$
	  Combining the above identity, (2.2) and Lemma 2.3, we obtain$$
	  \sum\limits_{i=1}^{p-1}\frac{(-1)^{(p+i)}\left(\frac{p+i}{3}\right)}{i^n} \equiv-\dfrac{2^{n+1}+4}{n 6^n} B_{p-n}(1 / 3) \pmod p.$$
	  Similarly, when $p \equiv -1 \pmod 6$, we have
	    $$\sum\limits_{i=1}^{p-1}\frac{(-1)^{(p+i)}\left(\frac{p+i}{3}\right)}{i^n}= -\sum_{\substack{i=1 \\ i \equiv 2\pmod 6}}^{p-1} \frac{1}{i^n}-\sum_{\substack{i=1 \\ i \equiv 3\pmod 6}}^{p-1} \frac{1}{i^n}+ \sum_{\substack{i=1 \\ i \equiv 5\pmod 6}}^{p-1} \frac{1}{i^n}+ \sum_{\substack{i=1 \\ i \equiv 0\pmod 6}}^{p-1} \frac{1}{i^n}.$$ Applying (2.2) and Lemma 2.3 again, we obtain the same congruence as the case of $p \equiv 1 \pmod 6$. Thus we prove the case where $n$ is even.\par
	  When $n$ is odd, $p-n$ is even, we can  prove the second congruence of Lemma 2.7 in the similar way .
\end{proof}
	\section{Proofs of the theorems}
	\textbf{Proof of Theorem 1.1}\quad Let $\omega$ be a complex primitive cubic root of unity. For any  integer $i$, we note that  $\left(\dfrac{i}{3}\right)=(\omega^i-\omega^{-i})/\left(\omega-\omega^{-1}\right)$ and $1-(-\omega)^{-1}=-\omega$. Hence, applying Lemma 2.6 and Lemma 2.5 we have 
	\begin{equation}
	\begin{aligned}
		&\sum\limits_{0<k_1<\cdots<k_n\le p-1}\frac{(-1)^{(p-k_n)}\left(\frac{p-k_n}{3}\right)}{k_{1}\cdots k_n^{2}}\\
		&=\frac{1}{\omega-\omega^{-1}}\left((-\omega)^p \sum\limits_{0<k_1<\cdots<k_n\le p-1}\frac{(-\omega)^{-k_n}}{k_{1}\cdots k_n^{2}} -(-\omega)^{-p} \sum\limits_{0<k_1<\cdots<k_n\le p-1}\frac{(-\omega)^{k_n}}{k_{1}\cdots k_n^{2}}\right)\\
		&\equiv \frac{(-1)^{n-1}}{\omega-\omega^{-1}}\left((-\omega)^p  \sum\limits_{0<i_1 \le i_2\le p-1}\frac{(-\omega)^{i_1}-1}{i_1i_2^n}-(-\omega)^{-p} \sum\limits_{0<i_1 \le i_2\le p-1}\frac{(-\omega)^{-i_1}-1}{i_1i_2^n}\right)\\
		&\equiv (-1)^{n-1}\left(\sum\limits_{0<i_1 \le i_2\le p-1}\frac{(-1)^{(p+i_1)}\left(\frac{p+i_1}{3}\right)}{i_1i_2^n}+\left(\frac{p}{3}\right)\sum\limits_{0<i_1 \le i_2\le p-1}\frac{1}{i_1i_2^n}\right)\\
		&\equiv \begin{cases}  \sum\limits_{0<i_1 \le i_2\le p-1}\dfrac{(-1)^{(p+i_1)}\left(\dfrac{p+i_1}{3}\right)}{i_1i_2^n}\pmod p & \text {if $2 \nmid n$} , \\
			- \sum\limits_{0<i_1 \le i_2\le p-1}\dfrac{(-1)^{(p+i_1)}\left(\dfrac{p+i_1}{3}\right)}{i_1i_2^n}-\left(\dfrac{p}{3}\right)B_{p-1-n}\pmod p &  \text {if $2 \mid n$} .\end{cases}
	\end{aligned}
	\end{equation}
	When $n$ is odd. Under substitutions $i_1 \rightarrow p-i_2$ and $i_2 \rightarrow p-i_1$, we obtain
	\begin{align*}
		\sum\limits_{0<i_1 \le i_2\le p-1}\dfrac{(-1)^{(p+i_1)}\left(\dfrac{p+i_1}{3}\right)}{i_1i_2^n}&\equiv 	\sum\limits_{0<i_1 \le i_2\le p-1}\dfrac{(-1)^{(2p-i_2)}\left(\dfrac{2p-i_2}{3}\right)}{i_1^ni_2}\\
		&\equiv \sum\limits_{0<i_1 \le i_2\le p-1}\dfrac{(-1)^{(p+i_2)}\left(\dfrac{p+i_2}{3}\right)}{i_1^ni_2}\pmod p.
	\end{align*}
Thus, by  $ \sum\limits_{k=1}^{p-1} \dfrac{1}{k^n} \equiv 0 \pmod p (n<p-1)$ and Lemma 2.7, we have 
 \begin{equation}\notag
	\begin{aligned}
		& \sum\limits_{0<i_1 \le i_2\le p-1}\dfrac{(-1)^{(p+i_1)}\left(\dfrac{p+i_1}{3}\right)}{i_1i_2^n}\\ \qquad {}&\equiv \frac{1}{2}\left(\sum\limits_{0<i_1, i_2\le p-1}\dfrac{(-1)^{(p+i_1)}\left(\dfrac{p+i_1}{3}\right)}{i_1i_2^n}+\sum\limits_{i=1}^{p-1}\dfrac{(-1)^{(p+i)}\left(\dfrac{p+i}{3}\right)}{i^{n+1}}\right)\\
		& \equiv \frac{1}{2}\sum\limits_{i=1}^{p-1}\dfrac{(-1)^{(p+i)}\left(\dfrac{p+i}{3}\right)}{i^{n+1}}\\
		&\equiv -\frac{2^{n}+1}{3(n+1)6^n}B_{p-n-1}\left(\frac{1}{3}\right)\pmod p.
	\end{aligned}
\end{equation}
Now using (3.1), we complete the proof of the first case of theorem 1.1.\par
When $n$ is even. Note that
\begin{equation}
	\begin{aligned}
		&\sum\limits_{0<i_1 \le i_2\le p-1}\dfrac{(-1)^{(p+i_1)}\left(\dfrac{p+i_1}{3}\right)}{i_1i_2^n}\\
		&=	\sum\limits_{0<i_1,  i_2\le p-1}\dfrac{(-1)^{(p+i_1)}\left(\dfrac{p+i_1}{3}\right)}{i_1i_2^n}-\sum\limits_{0<i_2 <i_1\le p-1}\dfrac{(-1)^{(p+i_1)}\left(\dfrac{p+i_1}{3}\right)}{i_1i_2^n}\\
		&\equiv -\sum\limits_{0<j <i\le p-1}\dfrac{(-1)^{(p+i)}\left(\dfrac{p+i}{3}\right)}{ij^n}\\
		&\equiv -\sum\limits_{0<i\le p-1}\dfrac{(-1)^{(p+i)}\left(\dfrac{p+i}{3}\right)}{i}\sum\limits_{0<j\le i-1}\dfrac{1}{j^n}
		\pmod p.
	\end{aligned}
\end{equation}
From the difference equation of Bernoulli polynomials $B_n(x+1)-B_n(x)=nx^{n-1}\left(n \ge 1\right)$, we can obtain the following well-known identity$$
\sum_{x=1}^{n-1} x^k=\sum_{r=0}^k\left(\begin{array}{c}
	k+1 \\
	r
\end{array}\right) \frac{B_r}{k+1} n^{k+1-r}, \quad \forall n, k \geq 1.$$
Applying  Euler's theorem  and the above identity to the last congruence of (3.2), we get $$
\begin{aligned}
	&\sum\limits_{0<i_1 \le i_2\le p-1}\dfrac{(-1)^{(p+i_1)}\left(\dfrac{p+i_1}{3}\right)}{i_1i_2^n}\\
	&\equiv -\sum_{r=0}^{p-1-n}\left(\begin{array}{c}
		{p-n} \\
		r
	\end{array}\right) \frac{B_r}{{p-n}}\sum\limits_{0<i\le p-1}\dfrac{(-1)^{(p+i)}\left(\dfrac{p+i}{3}\right)}{i^{n+r}} \pmod p.
\end{aligned}$$
Then we can prove the second congruence of  Theorem 1.1 by using Lemma 2.7 and (3.1), hence Theorem 1.1 is proved. \qedsymbol\\
\\
\textbf{Proof of Theorem 1.2}\quad Let $l=k_2+\cdots +k_n$. It's easy to see that
\begin{multline}
	\begin{aligned}
		\sum_{\substack{k_1+k_2+\cdots+k_n=p \\  k_i >0, 1 \le i \le n}} \dfrac{(-1)^{k_1}\left(\dfrac{k_1}{3}\right)}{k_1 \cdots k_n}&=\sum_{\substack{k_2+\cdots+k_n=l<p \\ 0< k_i, 2 \le i \le n}} \dfrac{(-1)^{(p-l)}\left(\dfrac{p-l}{3}\right)}{(p-l)k_2 \cdots k_n}\\
		&\equiv -\sum_{\substack{k_2+\cdots+k_n=l<p \\ 0< k_i, 2 \le i \le n}} \dfrac{(-1)^{p-l}\left(\dfrac{p-l}{3}\right)}{lk_2 \cdots k_n}\pmod p\\
		&= -(n-1)\sum_{\substack{k_2+\cdots+k_{n-1}=l_1<l<p \\ 0< k_i, 2 \le i \le {n-1}}} \dfrac{(-1)^{p-l}\left(\dfrac{p-l}{3}\right)}{l^2k_2 \cdots k_{n-1}}\\
		&=-(n-1)(n-2)\sum_{\substack{k_2+\cdots+k_{n-2}=l_2<l_1<l<p \\ 0< k_i, 2 \le i \le _n-2}} \dfrac{(-1)^{p-l}\left(\dfrac{p-l}{3}\right)}{l^2l_1k_2 \cdots k_{n-2}}\\
		&=\cdots= -(n-1)!\sum_{\substack 0<l_{n-2}<\cdots <l_1<l<p } \dfrac{(-1)^{p-l}\left(\dfrac{p-l}{3}\right)}{l^2l_1 \cdots l_{n-2}}.
	\end{aligned}
\end{multline}
Then  we can immediately obtain Theorem 1.2 by applying Theorem 1.1 .\qedsymbol\\
\\
\textbf{Proof of Theorem 1.3}\quad Similar to Theorem 1.2, for positive integer $n \ge 2$ and prime $p>n+1$,  we have 
\begin{multline}
\begin{aligned}
	&\sum_{\substack{k_1+k_2+\cdots+k_n=p \\  k_i >0, 1 \le i \le n}} \dfrac{(-1)^{k_1}\left(\left(\dfrac{k_1+1}{3}\right)-\left(\dfrac{k_1-1}{3}\right)\right)}{k_1 \cdots k_n}\\ &\equiv -(n-1)!\sum_{\substack 0<l_{n-2}<\cdots <l_1<l<p } \dfrac{(-1)^{p-l}\left(\left(\dfrac{p-l+1}{3}\right)-\left(\dfrac{p-l-1}{3}\right)\right)}{l^2l_1 \cdots l_{n-2}} \pmod p.
\end{aligned}
\end{multline}\par
Let $\omega$ be a complex primitive cubic root of unity. we observe that $\left(\dfrac{i+1}{3}\right)-\left(\dfrac{i-1}{3}\right)={\omega}^i+{\omega}^{-i}$ for any integer $i$. Using the same method as (3.1) and the fact that $\left(\dfrac{p+1}{3}\right)-\left(\dfrac{p-1}{3}\right)=-1$ for prime $p>3$, we obtain
\begin{multline}
\begin{aligned}
	&\sum_{\substack 0<l_{n-2}<\cdots <l_1<l<p } \dfrac{(-1)^{p-l}\left(\left(\dfrac{p-l+1}{3}\right)-\left(\dfrac{p-l-1}{3}\right)\right)}{l^2l_1 \cdots l_{n-2}} \\
	& \equiv  \begin{cases}  \sum\limits_{0<i_1 \le i_2\le p-1}\dfrac{(-1)^{(p+i_1)}\left(\left(\dfrac{p+i_1+1}{3}\right)-\left(\dfrac{p+i_1-1}{3}\right)\right)}{i_1i_2^{n-1}}\pmod p & \text {if $2 \mid n$} , \\
		- \sum\limits_{0<i_1 \le i_2\le p-1}\dfrac{(-1)^{(p+i_1)}\left(\left(\dfrac{p+i_1+1}{3}\right)-\left(\dfrac{p+i_1-1}{3}\right)\right)}{i_1i_2^{n-1}}\\
		\quad {}+B_{p-n}\pmod p &  \text {if $2 \nmid n$} .\end{cases}
\end{aligned}
\end{multline}
\par 
 On the other hand, $(-1)^{(p+i)}\left(\left(\dfrac{p+i+1}{3}\right)-\left(\dfrac{p+i-1}{3}\right)\right)$ takes the values $2$, $1$, $-1$, $-2$, $-1$, $1$  as $p+i \equiv 0, 1, 2, 3, 4, 5\pmod 6.$ Referring to the proof of Lemma 2.7, we can get 
 \begin{multline}
 \begin{aligned}
 	&\sum\limits_{i=1}^{p-1}\frac{(-1)^{(p+i)}\left(\left(\dfrac{p+i+1}{3}\right)-\left(\dfrac{p+i-1}{3}\right)\right)}{i^n}\\
 	\qquad {}&\equiv \begin{cases}\dfrac{(3^{n-1}-1)(2^{n-1}-1)}{n 6^{n-1}} B_{p-n}\pmod p & \text {if $2 \nmid n$} , \\
 		0 \pmod p
 		&  \text {if $2 \mid n$} .\end{cases}
 \end{aligned}
 \end{multline}
 Applying (3.6) and  (3.5) , we can obtain the following congruences in a way similar to Theorem 1.1,
 \begin{multline}
	\begin{aligned}
		&\sum_{\substack 0<l_{n-2}<\cdots <l_1<l<p } \dfrac{(-1)^{p-l}\left(\left(\dfrac{p-l+1}{3}\right)-\left(\dfrac{p-l-1}{3}\right)\right)}{l^2l_1 \cdots l_{n-2}} \\
		& \equiv  \begin{cases} 
		\sum\limits_{r=0}^{p-n}\left(\begin{array}{c}
			{p-n+1} \\
			r
		\end{array}\right) \dfrac{(3^{n+r-2}-1)(2^{n+r-2}-1)B_rB_{p-n-r+1}}{(n-1)(n+r-1)6^{n+r-2}} \pmod p & \text {if $2 \mid n$} , \\
		\left(1-\dfrac{(3^{n-1}-1)(2^{n-1}-1)}{2n 6^{n-1}}\right) B_{p-n} \pmod p & \text {if $2 \nmid n$}.\end{cases}
	\end{aligned}
\end{multline}
Combining (3.7) and (3.4), we comgplete the proof of Theorem 1.3.\qedsymbol
\normalsize
\baselineskip=17pt
\bibliographystyle{plain}
\bibliography{newcongruence}
\end{document}